%% file: main.tex
\documentclass[11pt]{article}
\usepackage[letterpaper,margin=0.95in,headheight=15pt]{geometry}
\usepackage[T1]{fontenc}
\usepackage{lmodern,amsmath,amssymb,amsthm,mathtools,microtype}
\usepackage{enumitem,xcolor,fancyhdr}
\definecolor{navy}{RGB}{25,55,85}
\usepackage[colorlinks=true,linkcolor=navy,citecolor=navy,urlcolor=navy]{hyperref}
\newcommand{\E}{\mathbb E}
\newcommand{\Pp}{\mathbb P}

\newcommand{\one}{\mathbf 1}
\newcommand{\Ucal}{\mathcal U}
\newcommand{\Law}{\mathcal L}

\newcommand{\wh}{\widehat}
\newtheorem{theorem}{Theorem}[section]
\newtheorem{lemma}[theorem]{Lemma}
\newtheorem{proposition}[theorem]{Proposition}
\newtheorem{corollary}[theorem]{Corollary}
\theoremstyle{definition}
\newtheorem{definition}[theorem]{Definition}
\theoremstyle{remark}
\newtheorem{remark}[theorem]{Remark}
\numberwithin{equation}{section}
\setlist{itemsep=3pt,topsep=5pt,leftmargin=*}
\begin{document}

\title{A Note on a Result of Chen Li}
\author{Xuan Fang\footnote{26110180010@m.fudan.edu.cn} \quad and \quad Tianyu Wang\footnote{wangtianyu@fudan.edu.cn}}
\date{Sep 12, 2026} 
\maketitle 

\begin{abstract} 
Recently, C. Li posted a preprint that resolves the fractional version of Talagrand's discrete convexity problem. In this note, we use the method of C. Li to answer another problem of Talagrand. 




\end{abstract}

\input{intro}

\subsection{Definitions and the Main Result}\label{sec:main}
We identify a subset of $[N]=\{1,\ldots,N\}$ with its indicator vector in $\{0,1\}^N$. For $0<s<1$, let $\mu_s$ be the product measure 
\[
\mu_s(A)=s^{|A|}(1-s)^{N-|A|}\qquad(A\subseteq[N]).
\]
A family $\Ucal\subseteq2^{[N]}$ is \emph{increasing} if $A\in\Ucal$ and $A\subseteq B$ imply $B\in\Ucal$. For any random variable $W$, we use $\Law(W)$ to denote its distribution law. Next we recall some conventions in the following definition. 

\begin{definition}[Spreadness and independent thinning]
A probability measure $\nu$ is $q$-spread if a random set $J\sim\nu$ satisfies
\begin{equation}\label{eq:spread}
\Pp(I\subseteq J)\le q^{|I|}\qquad(I\subseteq[N]).
\end{equation}
For $t \in (0,1)$, define the retention law $T_t\nu=\Law(J\cap X_t)$, where $J \sim \nu$ and $X_t\sim\mu_t$ is independent of $J$. 
\end{definition}

We use a small spread parameter: the alternative $R$-spread convention with bound $R^{-|I|}$ corresponds to $q=1/R$.

For laws $\rho,\sigma$ on the Hamming cube, write $\rho\le_d\sigma$ if there exists a coupling $(A,B)$ with marginals $\rho$ and $\sigma$ respectively, such that $A\subseteq B$ almost surely. This is equivalent to
\begin{equation}\label{eq:upsetorder} 
\rho(\Ucal)\le\sigma(\Ucal)\qquad\text{for every increasing family }\Ucal.
\end{equation}

\begin{theorem}[Domination after thinning]\label{thm:main}
Let $0<t,q,p<1$. If $\nu$ is $q$-spread and
\begin{equation}\label{eq:condition}
\eta:=\frac{tq(1-p)}{p(1-t)}\le1,
\end{equation}
then $T_t\nu\le_d\mu_p$. In particular, for every $q$-spread law,
\begin{equation}\label{eq:generaltheorem}
\displaystyle T_t\nu\le_d\mu_{p_*},\qquad
p_*:=\frac{tq}{1-t+tq}.
\end{equation}
The assertion is uniform in the dimension $N$.
\end{theorem}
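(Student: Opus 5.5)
The plan is to prove the first assertion by building the coupling coordinate by coordinate. The second assertion then follows by specialization: with $p=p_*$ we get $p_*/(1-p_*)=tq/(1-t)$, so $\eta=1$ and \eqref{eq:condition} holds.

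Let $Y=J\cap X_t$ and write $Y_i=\one\{i\in Y\}$. I would use the standard sequential criterion for \eqref{eq:upsetorder}. If for every $i\in[N]$ and every history $h=(Y_1,\dots,Y_{i-1})$ of positive probability
\begin{equation*}
\Pp(Y_i=1\mid h)\le p ,
\end{equation*}
then the usual quantile coupling produces $Z\sim\mu_p$ with $Y\subseteq Z$ almost surely. Concretely, take i.i.d. uniforms $U_i$, set $Y_i=\one\{U_i\le \Pp(Y_i=1\mid h)\}$ and $Z_i=\one\{U_i\le p\}$. This reduces the theorem to a one-coordinate odds bound,
\begin{equation*}
\frac{\Pp(Y_i=1,h)}{\Pp(Y_i=0,h)}\le\frac{p}{1-p},
\end{equation*}
uniformly in $N$, $i$ and $h$. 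Since the criterion is insensitive to the order of coordinates, I am free to reveal them in any order, including an adaptive one, if that helps.

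Split according to whether $i\in J$. We have $\Pp(Y_i=1,h)=t\,\Pp(i\in J,h)$ and $\Pp(Y_i=0,h)=(1-t)\Pp(i\in J,h)+\Pp(i\notin J,h)$. Here I use that $X_t$ is independent of $J$ and of the other coordinates, so that conditionally on $J$ and $h$ the bit $X_{t,i}$ is still $\mathrm{Bernoulli}(t)$. Put $a=\Pp(i\in J\mid h)$. The odds are then $ta/\bigl((1-t)a+(1-a)\bigr)$, which is increasing in $a$.

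So everything reduces to an upper bound on the conditional inclusion probability $a$. If $a\le q$ held, one would get even $p\ge tq$. The statement only asks for the weaker $p_*=tq/(1-t+tq)$. This suggests that the true estimate is a bound on the odds of $i\in J$ given $h$, with the factor $(1-t)^{-1}$ lost through the conditioning. A short calculation shows that what is needed is exactly the following. Under \eqref{eq:condition}, it suffices that
\begin{equation*}
t\,(1-q)\,\Pp(i\in J,h)\le \frac{tq}{1-t}\,\Pp(i\notin J,h)\qquad\text{for all } h ,
\end{equation*}
possibly after re-choosing the revelation order.

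The main obstacle is proving this conditional spreadness estimate. Spreadness \eqref{eq:spread} only controls the events $\{I\subseteq J\}$ unconditionally. The history $h$, however, contains zeros, i.e.\ events $\{k\notin J\cap X_t\}$, and conditioning on these can raise the probability of $i\in J$.

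My first attempt, following Li's method, is to expand $\Pp(i\in J,h)$ over the unknown set $J\cap\{1,\dots,i-1\}$ and the unknown values of $X_t$ there. Each zero of $h$ contributes either an element outside $J$ or an element of $J$ killed by the thinning, the latter with weight $1-t$. I would then compare the sum term by term with the corresponding expansion of $\Pp(i\notin J,h)$. The comparison would use $\Pp(I\cup\{i\}\subseteq J)\le q^{|I|+1}$ together with a Möbius/inclusion–exclusion rearrangement, in which the thinning weights $(1-t)^{|\cdot|}$ absorb the alternating signs.

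If the term-by-term comparison fails, the fallback is to bound not $\Pp(i\in J,h)$ itself but the conditional law of $J$ given $h$. The hope is to show that it is again $q/(1-t)$-spread on the unrevealed coordinates, by an induction on $i$ that preserves a suitably weighted spreadness. Plugging that into the odds formula gives exactly $\eta\le1$.
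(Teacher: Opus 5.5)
Your reduction to the sequential criterion is where the argument breaks down. The per-coordinate bound $\Pp(Y_i=1\mid h)\le p$ is sufficient for $\Law(Y)\le_d\mu_p$ but far from necessary, and for spread parents it is false in every revelation order, adaptive or not.

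Here is a concrete counterexample. Take disjoint blocks $A_1,\dots,A_m$ of size $r\ge2$ with $m\ge q^{-r}$, and let $J$ be uniform on $\{A_1,\dots,A_m\}$. Then $\Pp(I\subseteq J)=1/m\le q^r\le q^{|I|}$ if $\varnothing\ne I$ lies in one block, and $\Pp(I\subseteq J)=0$ otherwise, so $J$ is $q$-spread. Consider the event $\{J=A_\ell,\ A_\ell\subseteq X_t\}$, which has positive probability. On it, the first coordinate of $A_\ell$ you reveal is a $1$, and from then on the history $h$ forces $J=A_\ell$. So for the next unrevealed $i\in A_\ell$ you get $\Pp(i\notin J,h)=0<\Pp(i\in J,h)$, which violates your target inequality. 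Moreover $\Pp(Y_i=1\mid h)=t$, and $t>p_*$ because $q<1$.

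So the real obstruction is not the zeros in $h$. Those cost exactly the factor $(1-t)^{-1}$: for $J$ a uniform singleton on $1/q$ points, the all-zero history gives conditional probability exactly $p_*$. The obstruction is the ones. Spreadness gives no lower bound on $\Pp(k\in J)$, so $\Pp(i\in J\mid k\in J)$ can equal $1$. For the same reason your fallback fails: here the conditional law of $J$ given $h$ is a point mass, not $q/(1-t)$-spread. No M\"obius rearrangement can establish an inequality that is false.

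What is needed is an argument that works with increasing families directly and uses spreadness only unconditionally, paired with nonnegative coefficients. The paper does this as follows.
\begin{enumerate}
\item Fix an increasing $\Ucal$, put $f=\one_\Ucal$, and for a deterministic parent $j$ introduce the signed kernel $k_j(x,w)=\prod_{i\in j}\bigl(1+\beta\phi_t(x_i)\phi_p(w_i)\bigr)$ with $\beta^2=t(1-p)/(p(1-t))$.
\item This kernel vanishes whenever $j\cap x\nsubseteq w$ and satisfies $\E_{W\sim\mu_p}k_j(x,W)=1$. Hence $f(j\cap x)\le\bigl(\E_{W\sim\mu_p}[k_j(x,W)f(W)]\bigr)^2$.
\item Averaging over $X_t\sim\mu_t$ and using orthogonality gives $\Pp(j\cap X_t\in\Ucal)\le\sum_{I\subseteq j}\beta^{2|I|}\wh f_p(I)^2$.
\item Averaging over $J$ and applying $\Pp(I\subseteq J)\le q^{|I|}$ termwise yields $\sum_I\eta^{|I|}\wh f_p(I)^2\le\sum_I\wh f_p(I)^2=\mu_p(\Ucal)$.
\end{enumerate}
Your specialization $p=p_*\Rightarrow\eta=1$ for the second assertion is correct.
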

Theorem \ref{thm:main} is established by means of Li's argument and therefore settles Conjecture 7.8 of \cite{talagrand}, stated below as a corollary.
\begin{corollary}\cite[Conjecture 7.8]{talagrand}\label{cor:conjecture}
Fix
\[
0<\alpha\le\frac{\sqrt5-1}{2}.
\]
For every $N\ge1$, every $p\in(0,1)$, and every $\alpha p$-spread random set $J$, independent thinning gives
\begin{equation}\label{eq:conjecture}
\displaystyle J\cap X_\alpha\le_d X_p.
\end{equation}
Here $X_\alpha\sim\mu_\alpha$ is independent of $J$, and $X_p$ denotes a random set of law $\mu_p$ in a containing coupling. In particular, $\alpha=1/2$ is a universal choice.
\end{corollary}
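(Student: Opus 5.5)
The plan is to obtain the corollary as a direct specialization of Theorem~\ref{thm:main}: choose the parameters appropriately, check condition \eqref{eq:condition}, and then read off the conclusion.

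\textbf{Step 1: choice of parameters.} Let $\nu=\Law(J)$. I will set $t=\alpha$ and $q=\alpha p$, and keep the same $p$. All three parameters lie in $(0,1)$: we have $0<\alpha\le(\sqrt5-1)/2<1$ and $p\in(0,1)$, so $0<\alpha p<1$. By hypothesis $\nu$ is $q$-spread in the sense of \eqref{eq:spread}. Moreover $J\cap X_\alpha$ has law exactly $T_\alpha\nu$, since $X_\alpha\sim\mu_\alpha$ is independent of $J$.

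\textbf{Step 2: verifying \eqref{eq:condition}.} This is the only point with any content. Substituting the parameters gives
\[
\eta=\frac{\alpha\cdot\alpha p\,(1-p)}{p(1-\alpha)}=\frac{\alpha^2(1-p)}{1-\alpha}\le\frac{\alpha^2}{1-\alpha}.
\]
So it suffices to have $\alpha^2\le1-\alpha$, i.e.\ $\alpha^2+\alpha-1\le0$. The roots of $x^2+x-1$ are $(-1\pm\sqrt5)/2$, so for $\alpha>0$ this inequality is equivalent to $\alpha\le(\sqrt5-1)/2$. That is precisely the hypothesis, so $\eta\le1$. This also explains where the golden-ratio threshold comes from: the factor $p$ in the spread parameter cancels against the $p$ in the denominator of $\eta$. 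Because $1-p<1$, the bound is in fact strict.

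\textbf{Step 3: conclusion.} Theorem~\ref{thm:main} now gives $T_\alpha\nu\le_d\mu_p$. By the definition of $\le_d$, there is a coupling in which $J\cap X_\alpha\subseteq X_p$ almost surely with $X_p\sim\mu_p$, which is \eqref{eq:conjecture}. Since the theorem is uniform in $N$, the conclusion holds for every $N\ge1$ and every $p\in(0,1)$.

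For the final remark, $\alpha=1/2$ works because $1/2<(\sqrt5-1)/2\approx0.618$; directly, $\eta\le(1/4)/(1/2)=1/2$.

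I expect no genuine obstacle: all the difficulty is contained in Theorem~\ref{thm:main}, and the only step needing care is the algebra for $\eta$.
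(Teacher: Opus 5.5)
Your proposal is correct and is essentially the paper's proof: both specialize Theorem~\ref{thm:main} with $t=\alpha$ and $q=\alpha p$, compute $\eta=\alpha^2(1-p)/(1-\alpha)$, and use $\alpha^2\le1-\alpha$ for $0<\alpha\le(\sqrt5-1)/2$ to get $\eta\le1$ uniformly in $p$ and $N$. The paper also remarks that the original pair $(J,X_\alpha)$ can be retained by sampling $X_p$ conditionally on $J\cap X_\alpha$ from the containing coupling, but this is not needed for the statement as written.
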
 

\begin{remark}
    As remarked in \cite{talagrand}, a positive answer to Conjecture 7.8 in \cite{talagrand} would also give positive answers to Conjectures 7.3 and 7.11 in \cite{talagrand}. 
\end{remark}

This is the thinning conjecture in Talagrand~\cite{talagrand}.
The argument below adapts the vanishing-kernel method in Li~\cite{li}. Li uses a kernel that vanishes when a parent cannot be covered by the union of two sets, whereas in this note, the kernel vanishes when a thinned parent is not contained in a target set. 

\section{The Kernel Argument}\label{sec:kernel}

To start with, we recall some Fourier analysis basics for Boolean functions; See \cite{o2014analysis} for a general reference; See \cite{friedgut2008,ellis_filmus_friedgut2012} for spectral methods to intersecting families, and \cite{haemers2021} for spectral methods to Hoffman's ratio bound, both from an analytic perspective. 
For $s\in(0,1)$ and $x \in \{ 0,1 \}$, define
\[
\phi_s(x)=\frac{x-s}{\sqrt{s(1-s)}},\qquad
\omega_I^{(s)}(x)=\prod_{i\in I}\phi_s(x_i),\qquad \omega_\varnothing^{(s)}=1.
\]
The one-coordinate identities $\E_{\mu_s}\phi_s=0$ and $\E_{\mu_s}\phi_s^2=1$, together with independence, show that the $2^N$ functions $\omega_I^{(s)}$ are an orthonormal basis. Consequently, every real function $f$ on the cube satisfies
\begin{equation}\label{eq:walsh}
f=\sum_{I\subseteq[N]}\wh f_s(I)\omega_I^{(s)},\qquad
\wh f_s(I)=\E_{\mu_s}[f\omega_I^{(s)}],\qquad
\E_{\mu_s}f^2=\sum_I\wh f_s(I)^2.
\end{equation}

Throughout this section, $t$ is the retention probability (in the operator $T_t$) and $p$ is the target product density. Set
\begin{equation}\label{eq:beta}
\beta=\sqrt{\frac{t(1-p)}{p(1-t)}}.
\end{equation}

Inspired by the argument of Li, we define the following kernel. 
\begin{lemma}[The one-coordinate kernel]\label{lem:kernel}
Define
\begin{equation}\label{eq:h}
h(x,w)=1+\beta\phi_t(x)\phi_p(w)
      =1+\frac{(x-t)(w-p)}{p(1-t)}.
\end{equation}
Then
\begin{equation}\label{eq:htable}
\bigl(h(x,w)\bigr)_{x,w\in\{0,1\}}
=
\begin{pmatrix}
\dfrac1{1-t}&\dfrac{p-t}{p(1-t)}\\[6pt]
0&\dfrac1p
\end{pmatrix}.
\end{equation}
In particular,
\begin{equation}\label{eq:kernelproperties}
h(1,0)=0,\qquad
\E_{W\sim\mathrm{Bernoulli}(p)}h(x,W)=1\quad \text{ for $x=0,1$}.
\end{equation}
\end{lemma}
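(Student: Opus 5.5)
The lemma is a direct computation, and I will organize it in three steps.

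\emph{Step 1: the closed form in \eqref{eq:h}.} I substitute the definitions of $\phi_t$, $\phi_p$ and $\beta$ from \eqref{eq:beta}. The product is
\[
\beta\,\phi_t(x)\phi_p(w)=\sqrt{\frac{t(1-p)}{p(1-t)}}\cdot\frac{(x-t)(w-p)}{\sqrt{t(1-t)}\sqrt{p(1-p)}} .
\]
The factors $\sqrt{t}$ and $\sqrt{1-p}$ cancel. The remaining denominator is $\sqrt{p(1-t)}\cdot\sqrt{p(1-t)}=p(1-t)$. This gives the right-hand side of \eqref{eq:h}.

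\emph{Step 2: the table \eqref{eq:htable}.} I evaluate $1+(x-t)(w-p)/(p(1-t))$ at the four points of $\{0,1\}^2$.
\begin{itemize}
\item At $(0,0)$ the value is $1+tp/(p(1-t))=1+t/(1-t)=1/(1-t)$.
\item At $(0,1)$ the value is $1-t(1-p)/(p(1-t))=(p-pt-t+tp)/(p(1-t))=(p-t)/(p(1-t))$.
\item At $(1,0)$ the value is $1-(1-t)p/(p(1-t))=0$.
\item At $(1,1)$ the value is $1+(1-t)(1-p)/(p(1-t))=1+(1-p)/p=1/p$.
\end{itemize}

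\emph{Step 3: the properties \eqref{eq:kernelproperties}.} The identity $h(1,0)=0$ can be read off the table. For the averaging identity, I use $\E_{W\sim\mathrm{Bernoulli}(p)}\phi_p(W)=0$, which is the one-coordinate identity already recalled before \eqref{eq:walsh}. By linearity,
\[
\E\, h(x,W)=1+\beta\phi_t(x)\,\E\phi_p(W)=1
\]
for each fixed $x$. As a cross-check against the table:
\begin{itemize}
\item for $x=0$: $(1-p)/(1-t)+p\cdot(p-t)/(p(1-t))=(1-t)/(1-t)=1$;
\item for $x=1$: $(1-p)\cdot 0+p\cdot(1/p)=1$.
\end{itemize}

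Nothing here is a genuine obstacle. The only point needing care is the square-root cancellation in Step 1, which produces the exact normalization $p(1-t)$. That normalization is what makes $h(1,0)$ vanish identically rather than only up to a constant.
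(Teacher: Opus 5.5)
Your proposal is correct and matches the paper's proof: the table comes from substituting the four points of $\{0,1\}^2$ into the closed form, and the normalization comes from $\E\phi_p(W)=0$. Your extra steps, namely checking the closed form through the square-root cancellation in $\beta\phi_t(x)\phi_p(w)$ and cross-checking the averages against the table, are accurate. They go slightly beyond what the paper writes out.
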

\begin{proof}
Substitution of $x,w\in\{0,1\}$ into~\eqref{eq:h} gives the displayed matrix. The normalization follows from $\E\phi_p(W)=0$.
\end{proof}

For a deterministic set $j\subseteq[N]$, define
\begin{equation}\label{eq:kandL}
k_j(x,w)=\prod_{i\in j}h(x_i,w_i),\qquad
(L_jf)(x)=\E_{W\sim\mu_p}[k_j(x,W)f(W)].
\end{equation}
Coordinates outside $j$ contribute the constant factor $1$. The empty product equals $1$, so $L_\varnothing f$ is the constant $\E_{\mu_p}f$.

\begin{lemma}[Normalization and noncontainment]\label{lem:containment}
For every $j,x,w\subseteq[N]$,
\begin{equation}\label{eq:zero}
j\cap x\nsubseteq w\quad\Longrightarrow\quad k_j(x,w)=0.
\end{equation}
Moreover, $L_j1=1$ for every $j$.
\end{lemma}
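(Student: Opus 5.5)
The plan is to reduce both claims to the one-coordinate facts in Lemma~\ref{lem:kernel}, using that $k_j$ is a product over the coordinates of $j$.

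For the noncontainment statement~\eqref{eq:zero}, suppose $j\cap x\nsubseteq w$. Then there is an index $i\in j$ with $x_i=1$ and $w_i=0$. The product defining $k_j(x,w)$ contains the factor $h(x_i,w_i)=h(1,0)$, which is $0$ by~\eqref{eq:kernelproperties}. Hence $k_j(x,w)=0$. Nothing more is needed here. In particular, the factors $h(0,1)=(p-t)/(p(1-t))$ may be negative, but this plays no role.

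For the normalization $L_j1=1$, fix $x$ and write
\[
(L_j1)(x)=\E_{W\sim\mu_p}\prod_{i\in j}h(x_i,W_i).
\]
Under $\mu_p$ the coordinates $W_i$ are independent $\mathrm{Bernoulli}(p)$ variables. The $i$-th factor depends on $W$ only through $W_i$, and $x$ is deterministic. So the expectation of the product factors as $\prod_{i\in j}\E_{W_i\sim\mathrm{Bernoulli}(p)}h(x_i,W_i)$. Each of these factors equals $1$ by the second identity in~\eqref{eq:kernelproperties}, whether $x_i=0$ or $x_i=1$. Therefore $(L_j1)(x)=1$ for every $x$.

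The case $j=\varnothing$ is consistent with the convention that the empty product is $1$, so that $L_\varnothing 1=\E_{\mu_p}1=1$. There is no real obstacle. The only point to state explicitly is the factorization of the expectation over independent coordinates, which applies because each factor of $k_j$ involves a single coordinate of $W$.
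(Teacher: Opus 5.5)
Your proposal is correct and follows the paper's argument: the noncontainment claim comes from the vanishing factor $h(1,0)=0$ at an index $i\in(j\cap x)\setminus w$. The normalization $L_j1=1$ comes from factoring the expectation over the independent coordinates of $W\sim\mu_p$ and applying $\E h(x_i,W_i)=1$ to each factor.
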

\begin{proof}
If $j\cap x\nsubseteq w$, choose $i\in(j\cap x)\setminus w$. The corresponding factor in~\eqref{eq:kandL} is $h(1,0)=0$. For the second assertion, independence of the coordinates of $W$ and~\eqref{eq:kernelproperties} give
\[
(L_j1)(x)=\prod_{i\in j}\E h(x_i,W_i)=1.
\]
\end{proof}

\begin{remark}[The kernel may be signed]\label{rem:signed}
When $p<t$, the entry $h(0,1)$ is negative. Neither $k_j$ nor $L_j$ is assumed to be positive. They are auxiliary algebraic objects. The proof uses the zero pattern and normalization of Lemma~\ref{lem:containment}, and then takes the square of the real number $L_jf(x)$. No probabilistic coupling is inferred from the kernel itself.
\end{remark}

\section{The square inequality and the positive certificate}\label{sec:certificate}
\begin{lemma}[Pointwise square bound]\label{lem:square}
Let $\Ucal$ be increasing and $f=\one_\Ucal$. For every deterministic $j,x\subseteq[N]$,
\begin{equation}\label{eq:square}
\displaystyle f(j\cap x)\le (L_jf(x))^2.
\end{equation}
\end{lemma}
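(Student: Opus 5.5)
Since $f$ takes values in $\{0,1\}$, the inequality~\eqref{eq:square} is trivial when $f(j\cap x)=0$, because the right-hand side is a square of a real number. So I will fix $j,x$ with $j\cap x\in\Ucal$ and aim to show $L_jf(x)=1$ exactly; then $(L_jf(x))^2=1=f(j\cap x)$ and the bound holds with equality.

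The key step uses the zero pattern of Lemma~\ref{lem:containment}. By definition~\eqref{eq:kandL},
\[
(L_jf)(x)=\E_{W\sim\mu_p}\bigl[k_j(x,W)f(W)\bigr].
\]
By~\eqref{eq:zero}, $k_j(x,w)=0$ unless $j\cap x\subseteq w$. On the remaining event $\{j\cap x\subseteq W\}$, monotonicity of $\Ucal$ and $j\cap x\in\Ucal$ give $W\in\Ucal$, i.e.\ $f(W)=1$. Hence pointwise in $w$ we have $k_j(x,w)f(w)=k_j(x,w)$: either the kernel vanishes, or $f(w)=1$. Taking expectations and invoking the normalization $L_j1=1$ from Lemma~\ref{lem:containment},
\[
(L_jf)(x)=\E_{W\sim\mu_p}\bigl[k_j(x,W)\bigr]=(L_j1)(x)=1 .
\]

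There is no real obstacle here. The only point requiring care is that $k_j$ may be signed (Remark~\ref{rem:signed}), so one cannot argue via positivity or a bound like $L_jf\le1$. The argument above avoids this entirely, since it is an exact identity on the support of the kernel and never uses positivity. The case $j=\varnothing$ is covered automatically: $j\cap x=\varnothing\in\Ucal$ forces $\Ucal=2^{[N]}$, so $f\equiv1$ and $L_\varnothing f=1$.
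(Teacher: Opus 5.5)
Your proof is correct and is essentially the paper's argument. Both reduce to the case $j\cap x\in\Ucal$, use the zero pattern \eqref{eq:zero} together with monotonicity to get $k_j(x,w)f(w)=k_j(x,w)$ pointwise, and conclude $L_jf(x)=L_j1(x)=1$ by the normalization in Lemma~\ref{lem:containment}, without any appeal to positivity of the kernel.
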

\begin{proof}
Suppose first that $j\cap x\in\Ucal$. If $k_j(x,w)\ne0$, Lemma~\ref{lem:containment} implies $j\cap x\subseteq w$. Since $\Ucal$ is increasing, $f(w)=1$. Thus, for every $w$,
\[
k_j(x,w)f(w)=k_j(x,w).
\]
Averaging under $\mu_p$ gives $L_jf(x)=L_j1(x)=1$. Hence both sides of~\eqref{eq:square} equal $1$. If $j\cap x\notin\Ucal$, the left-hand side is zero and the right-hand side is a square, so the inequality again holds.
\end{proof}

\begin{lemma}[Walsh expansion of the lifted function]\label{lem:expansion}
For any real function $f$ on the cube and every deterministic $j$,
\begin{equation}\label{eq:liftedexpansion}
L_jf(x)=\sum_{I\subseteq j}\beta^{|I|}\wh f_p(I)\omega_I^{(t)}(x).
\end{equation}
Consequently,
\begin{equation}\label{eq:liftedparseval}
\E_{X_t\sim\mu_t}(L_jf(X_t))^2
=\sum_{I\subseteq j}\beta^{2|I|}\wh f_p(I)^2.
\end{equation}
\end{lemma}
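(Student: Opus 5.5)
We will expand $f$ in the $p$-biased Walsh basis and then compute $L_j$ on each basis function separately. By \eqref{eq:walsh}, $f=\sum_I \wh f_p(I)\omega_I^{(p)}$, and $L_j$ is linear in $f$. So it suffices to show, for every $I\subseteq[N]$,
\[
L_j\omega_I^{(p)}(x)=\begin{cases}\beta^{|I|}\omega_I^{(t)}(x), & I\subseteq j,\\ 0,& I\nsubseteq j.\end{cases}
\]

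\emph{Factorization.} Under $\mu_p$ the coordinates $W_i$ are independent. Both $k_j(x,W)=\prod_{i\in j}h(x_i,W_i)$ and $\omega_I^{(p)}(W)=\prod_{i\in I}\phi_p(W_i)$ are products over coordinates, so the expectation factors over $i\in j\cup I$. This leaves three types of coordinate.

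\begin{itemize}
\item For $i\in I\setminus j$, the factor is $\E\phi_p(W_i)=0$. Hence the whole term vanishes whenever $I\nsubseteq j$.
\item For $i\in j\setminus I$, the factor is $\E h(x_i,W_i)=1$ by \eqref{eq:kernelproperties}.
\item For $i\in I\subseteq j$, the factor is
\[
\E\bigl[(1+\beta\phi_t(x_i)\phi_p(W_i))\phi_p(W_i)\bigr]=0+\beta\phi_t(x_i)\E\phi_p(W_i)^2=\beta\phi_t(x_i),
\]
using $\E_{\mu_p}\phi_p=0$ and $\E_{\mu_p}\phi_p^2=1$.
\end{itemize}

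Multiplying the factors gives the claim. Summing over $I$ then yields \eqref{eq:liftedexpansion}.

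\emph{Parseval.} For \eqref{eq:liftedparseval}, the functions $\omega_I^{(t)}$ are orthonormal in $L^2(\mu_t)$ by the discussion preceding \eqref{eq:walsh}. Since \eqref{eq:liftedexpansion} writes $L_jf$ in this basis with coefficients $\beta^{|I|}\wh f_p(I)\one_{I\subseteq j}$, squaring and taking the expectation gives the stated sum.

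No step is a real obstacle. The only point needing care is the bookkeeping in the factorization: coordinates outside $j\cup I$ contribute trivially, and coordinates in $I\setminus j$ kill the term. This is exactly what restricts the sum to $I\subseteq j$.
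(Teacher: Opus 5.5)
Your proof is correct. It runs the paper's computation from the opposite side.

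The paper expands the kernel,
$k_j(x,w)=\prod_{i\in j}\bigl(1+\beta\phi_t(x_i)\phi_p(w_i)\bigr)=\sum_{I\subseteq j}\beta^{|I|}\omega_I^{(t)}(x)\omega_I^{(p)}(w)$,
then multiplies by $f(w)$ and averages under $\mu_p$. The coefficient $\wh f_p(I)=\E_{\mu_p}[f\omega_I^{(p)}]$ then appears directly by definition. You instead expand $f$ in the $p$-biased basis and compute $L_j\omega_I^{(p)}$ coordinate by coordinate.

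The paper's route is shorter. It needs no case split, and it does not even use completeness of the basis, only the definition of $\wh f_p(I)$. The restriction to $I\subseteq j$ comes for free because the product runs over $i\in j$ only.

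Your route uses the expansion \eqref{eq:walsh} and the one-coordinate moments $\E_{\mu_p}\phi_p=0$ and $\E_{\mu_p}\phi_p^2=1$. In exchange, it states explicitly that $L_j$ is diagonal between the two bases: it sends $\omega_I^{(p)}$ to $\beta^{|I|}\omega_I^{(t)}$ when $I\subseteq j$ and to $0$ otherwise. Your factor computation is exactly the $p$-biased Fourier coefficient of $k_j(x,\cdot)$, so you are reading off the paper's kernel expansion one coefficient at a time.

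The Parseval step via orthonormality under $\mu_t$ is the same in both.
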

\begin{proof}
Expand the finite product in~\eqref{eq:kandL}:
\[
k_j(x,w)=\prod_{i\in j}\bigl(1+\beta\phi_t(x_i)\phi_p(w_i)\bigr)
=\sum_{I\subseteq j}\beta^{|I|}\omega_I^{(t)}(x)\omega_I^{(p)}(w).
\]
Multiply by $f(w)$ and average in $w$ under $\mu_p$ to obtain~\eqref{eq:liftedexpansion}. Orthogonality under $\mu_t$ gives~\eqref{eq:liftedparseval}.
\end{proof}

\begin{proposition}[A positive majorant on the parent cube]\label{prop:certificate}
Let $\Ucal$ be increasing and $f=\one_\Ucal$. Define
\begin{equation}\label{eq:coefficients}
c_I=\left(\frac{t(1-p)}{p(1-t)}\right)^{|I|}\wh f_p(I)^2
\qquad(I\subseteq[N]).
\end{equation}
Then every $c_I$ is nonnegative, and for every deterministic parent $j$,
\begin{equation}\label{eq:positivecertificate}
\displaystyle
\Pp(j\cap X_t\in\Ucal)\le\sum_{I\subseteq j}c_I.
\end{equation}
The coefficients depend on $\Ucal,t,p$ and are independent of the parent law $\nu$.
\end{proposition}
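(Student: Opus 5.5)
The plan is to combine the pointwise square bound of Lemma~\ref{lem:square} with the lifted Parseval identity of Lemma~\ref{lem:expansion}. Nonnegativity is immediate. By definition~\eqref{eq:beta}, $t(1-p)/(p(1-t))=\beta^2>0$, so $c_I=\beta^{2|I|}\wh f_p(I)^2$ is a positive number times a square. The same formula shows that $c_I$ involves only $f=\one_\Ucal$, $t$ and $p$, through the Walsh coefficients $\wh f_p(I)=\E_{\mu_p}[f\omega_I^{(p)}]$ and the constant $\beta$. No parent law $\nu$ enters, which gives the final sentence of the statement.

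For~\eqref{eq:positivecertificate}, fix a deterministic $j$ and let $X_t\sim\mu_t$. Since $f$ is the indicator of $\Ucal$, we have $\Pp(j\cap X_t\in\Ucal)=\E_{\mu_t} f(j\cap X_t)$. Apply Lemma~\ref{lem:square} pointwise at each $x=X_t$ to get $f(j\cap X_t)\le (L_jf(X_t))^2$ almost surely. Taking expectation under $\mu_t$ gives
\[
\Pp(j\cap X_t\in\Ucal)\le \E_{X_t\sim\mu_t}(L_jf(X_t))^2 .
\]
By~\eqref{eq:liftedparseval} the right-hand side equals $\sum_{I\subseteq j}\beta^{2|I|}\wh f_p(I)^2=\sum_{I\subseteq j}c_I$, which is~\eqref{eq:positivecertificate}.

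There is no real obstacle, since the work was already done in the lemmas. The one point to watch is the measures: Lemma~\ref{lem:expansion} takes Walsh coefficients under $\mu_p$ but orthogonality under $\mu_t$. The expectation over $X_t$ must therefore use the $t$-biased basis, which is exactly how~\eqref{eq:liftedparseval} is stated. The product $\beta\phi_t(x)\phi_p(w)$ in $h$ matches these two bases. Remark~\ref{rem:signed} also matters here: $L_jf$ may be negative when $p<t$, but we only use its square, so the bound is unaffected.
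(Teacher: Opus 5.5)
Your proposal is correct and follows the paper's proof: average the pointwise square bound of Lemma~\ref{lem:square} over $X_t\sim\mu_t$, then evaluate $\E(L_jf(X_t))^2$ with Lemma~\ref{lem:expansion}. The only differences are minor. The paper writes out the double-sum orthogonality step where you cite \eqref{eq:liftedparseval} directly, and you state the nonnegativity of $c_I=\beta^{2|I|}\wh f_p(I)^2$ and its independence from $\nu$ explicitly, which the paper leaves implicit.
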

\begin{proof}
    Average the square bound~\eqref{eq:square} over $X_t\sim\mu_t$ and plugging Lemma \ref{lem:expansion} yields
    \begin{align*}
		\Pp(j\cap X_t\in\Ucal)
		&=\E f(j\cap X_t)\\
		&\le \E\bigl(L_jf(X_t)\bigr)^2\\
        &=\sum_{I,K\subseteq j}
		\beta^{|I|+|K|}\wh f_p(I)\wh f_p(K)
		\E\bigl[\omega_I^{(t)}(X_t)\omega_K^{(t)}(X_t)\bigr]\\
		&=\sum_{I\subseteq j}\beta^{2|I|}\wh f_p(I)^2
		=\sum_{I\subseteq j}
		\left(\frac{t(1-p)}{p(1-t)}\right)^{|I|}
		\wh f_p(I)^2
		=\sum_{I\subseteq j}c_I.
	\end{align*}
\end{proof}

\section{Proof of domination and of Theorem \ref{thm:main}}\label{sec:proof}
\begin{proof}[Proof of Theorem~\ref{thm:main}]
Fix an increasing family $\Ucal$ and put $f=\one_\Ucal$. Let $J\sim\nu$ and let $X_t\sim\mu_t$ be independent. Conditional on $J=j$, the mask still has law $\mu_t$. Thus Proposition~\ref{prop:certificate} gives
\begin{align}
\Pp(J\cap X_t\in\Ucal)
&\le\E_\nu\sum_{I\subseteq J}\beta^{2|I|}\wh f_p(I)^2\nonumber\\
&=\sum_{I\subseteq[N]}\beta^{2|I|}\wh f_p(I)^2\Pp(I\subseteq J)\nonumber\\
&\le\sum_{I\subseteq[N]}(\beta^2q)^{|I|}\wh f_p(I)^2\label{eq:averaging}\\
&\le\sum_{I\subseteq[N]}\wh f_p(I)^2\nonumber\\
&=\E_{\mu_p}f^2=\mu_p(\Ucal).\nonumber
\end{align}
The first inequality uses the independent thinning, the next uses spreadness with nonnegative coefficients, and the last inequality uses $\beta^2q=\eta\le1$. Parseval and the Boolean identity $f^2=f$ give the final equality. This proves $T_t\nu\leq_d\mu_p$ under condition \eqref{eq:condition}.

Finally, condition~\eqref{eq:condition} is equivalent to
\[
tq(1-p)\le p(1-t)
\quad\Longleftrightarrow\quad
p\ge\frac{tq}{1-t+tq}.
\]
The latter fraction lies in $(0,1)$, so choosing equality proves~\eqref{eq:generaltheorem}.
\end{proof}

\begin{proof}[Proof of Corollary~\ref{cor:conjecture}]
Use Theorem~\ref{thm:main} with $t=\alpha$ and $q=\alpha p$. Then
\[
\eta=\frac{\alpha^2(1-p)}{1-\alpha}.
\]
If $\alpha\le(\sqrt5-1)/2$, then $\alpha^2\le1-\alpha$, so $\eta\le1$ for every $p\in(0,1)$. The theorem gives $T_\alpha\nu\le_d\mu_p$ in every dimension.
\end{proof}

The independence required by the conjecture is the independence of $J$ and the mask in the definition of $T_\alpha\nu$. The containing target produced by the coupling argument may depend on the source. If the original pair $(J,X_\alpha)$ is to be retained, sample the target conditionally on $Y=J\cap X_\alpha$ using the conditional probabilities of the containing coupling. This preserves the law of $(J,X_\alpha)$ and gives $Y\subseteq X_p$ almost surely.

\section{Summary} 

This is a note that expands the scope of a recent argument of C. Li, and answers Conjecture 7.8 (thus also Conjectures 7.3 and 7.11) of \cite{talagrand}.

\section*{Statement of AI Usage}

The proof is developed by GPT6 under the authors' guidance, and is human-checked by the authors. 

\bibliographystyle{plain}
	\bibliography{references}
\end{document}

%% file: intro.tex
\section{Introduction} 

In \cite{talagrand}, Talagrand proposed a collection of problems at the intersection of probability and combinatorics. This note of Talagrand has simulated several breakthrough results \cite{park_pham2024_kahn_kalai,park_pham2024_selectors,pham_sharp_selectors,hua_song_tudose2026}, ranging from smallness of the witness of selector processes and Gaussian decomposition of sub-Gaussian random variables. 



The central discrete problem in \cite{talagrand} is Conjecture~7.1: a constant number of unions of a large family should leave a exceptional class with a small witness class. Around this central problem, there is a list of problems concerning explicit covers, spread probability measures, and the effect of taking unions. In particular, studies on small covers for positive stochastic processes
\cite{bednorz2022suprema,bednorz2024small}
and a reformulation of Talagrand's discrete convexity conjecture
\cite{ascoli_he_park_talagrand2026} provide further motivations for this problem. 

A fractional version of Conjecture~7.1 in \cite{talagrand} is Conjecture 7.2 in \cite{talagrand}, which asks whether a constant number of unions of a large family should leave an exceptional class with a weakly small witness class. This problem is closely linked to the sunflower problem posed
by Erd\H{o}s and Rado~\cite{erdos_rado1960}, which asks how large a family of $k$-element sets must be to contain $r$ distinct members whose pairwise intersections all equal a common core. Recent progress on this problem has highlighted the role of spread measures, linking sunflower bounds \cite{alweiss_lovett_wu_zhang2021,2020Coding,bell2021note,stoeckl2022lecture,frankston_kahn_narayanan_park2021} to fractional expectation thresholds and fractional covers~\cite{kahn2007thresholds,frankston_kahn_narayanan_park2021,frankston2022problem,dubroff2024note}.
Several days ago, C. Li posted a preprint \cite{li} that resolves this fractional discrete convexity problem of Talagrand. 


Building on the techniques of C. Li, this note resolves another problem posed by Talagrand. Specifically, we show that Li's argument also settles Conjecture 7.8 of \cite{talagrand}, which asserts that every spread measure is, up to averaging over subsets, stochastically dominated by a product measure. This consequently resolves Conjectures 7.3 and 7.11 of \cite{talagrand}.